%% if you are submitting an initial manuscript then you should have submission as an option here
%% if you are submitting a revised manuscript then you should have revision as an option here
%% otherwise options taken by the article class will be accepted
\documentclass{article}
%% but DO NOT pass any options (or change anything else anywhere) which alters page size / layout / font size etc
\usepackage[latin1]{inputenc}
\usepackage{subfigure}
\usepackage[T1]{fontenc}
% graphicx is now loaded automatically no need to put this in here anymore.
\usepackage{amsthm}
\usepackage{graphicx}
\usepackage{amsfonts}
\usepackage{amssymb}
% just comment this out if you don't have natbib, or if you don't want
% to use it
%\usepackage[round]{natbib}
\usepackage{amsmath, xypic}
%% note that the class file already loads {amsmath, amsthm, amssymb}
\newtheorem{thm}{Theorem}

\newtheorem{lem}{Lemma}
\newtheorem{prop}{Proposition}
\newtheorem{cor}{Corollary}
\newtheorem{exm}{Example}

\newtheorem{rem}{Remark}

\newcommand{\al}{\alpha}

\newcommand{\la}{\lambda}
\newcommand{\eps}{\varepsilon}

%%% Tableau Macros
%\setlength{\unitlength}{0.08em}
\newlength{\cellsz}
\newcounter{cellsize}
\newcommand{\setcellsize}[1]{%
  \setcounter{cellsize}{#1}%
  \setlength{\cellsz}{\value{cellsize}\unitlength}}%
%  \newsavebox{\cell}
%  \sbox{\cell}{\begin{picture}(\value{cellsize},\value{cellsize})
%  \put(0,0){\line(1,0){\value{cellsize}}}
%  \put(0,0){\line(0,1){\value{cellsize}}}
%  \put(\value{cellsize},0){\line(0,1){\value{cellsize}}}
%  \put(0,\value{cellsize}){\line(1,0){\value{cellsize}}}
%  \end{picture}}}
\setcellsize{11}%
\newcommand\cellify[1]{\def\thearg{#1}\def\nothing{}%
%\ifx\thearg\nothing \vrule width0pt height\cellsz depth0pt\else
\hbox to 0pt{{\begin{picture}(\value{cellsize},\value{cellsize})
  \put(0,0){\line(1,0){\value{cellsize}}}
  \put(0,0){\line(0,1){\value{cellsize}}}
  \put(\value{cellsize},0){\line(0,1){\value{cellsize}}}
  \put(0,\value{cellsize}){\line(1,0){\value{cellsize}}} \end{picture}
\hss}}%\fi%
\vbox to \cellsz{ \vss \hbox to \cellsz{\hss$#1$\hss} \vss}}
\newcommand\tableau[1]{\vcenter{\vbox{\let\\\cr
\baselineskip -16000pt \lineskiplimit 16000pt \lineskip 0pt
\ialign{&\cellify{##}\cr#1\crcr}}}}
\newcommand\tabl[1]{\vtop{\let\\\cr
\baselineskip -16000pt \lineskiplimit 16000pt \lineskip 0pt
\ialign{&\cellify{##}\cr#1\crcr}}}
%%% End of tableau macros
\begin{document}
%\usepackage{lipsum}

%% define your title in the usual way
\title{Generating series formulas for the structure constants of Solomon's descent algebra}

%% define your authors in the usual way
%% use \addressmark{1}, \addressmark{2} etc for the institutions, and use \thanks{} for contact details
\author{Alina R. Mayorova\and Ekaterina A. Vassilieva}

%% then use \addressmark to match authors to institutions here
%\address{\addressmark{1}Department of Higher Algebra, Moscow State University, Moscow, Russia\\ \addressmark{2}Laboratoire d'Informatique de l'Ecole Polytechnique, 91128 Palaiseau Cedex, France }

%% put the date of submission here
%\received{\today}

%% leave this blank until submitting a revised version
%\revised{}
\maketitle
%% put your English abstract here, or comment this out if you don't have one yet
%% please don't use custom commands in your abstract / resume, as these will be displayed online
%% likewise for citations -- please don't use \cite, and instead write out your citation as something like (author year)
\abstract{Introduced by Solomon in his 1976 paper, the descent algebra of a finite Coxeter group received significant attention over the past decades. As proved by Gessel, in the case of the symmetric group its structure constants give the comultiplication table for the fundamental basis of quasisymmetric functions. We show that this property actually implies several well known relations linked to the Robinson-Schensted-Knuth correspondence and some of its generalisations. We further use the theory of type B quasisymmetric functions introduced by Chow to provide analogue results when the Coxeter group is the hyperoctahedral group.}

\section{Introduction}
For any integer $n$ denote $[n]=\{1,\cdots,n\}$, $S_n$ the symmetric group on $[n]$ and $id_n$ the identity permutation of $S_n$. A {\bf composition} $\al$ of $n$ denoted $\al \vDash n$ or $|\al| = n$ is a sequence of positive integers $\al=(\al_1,\al_2,\cdots,\al_p)$ of $\ell(\al)=p$ parts such that $\al_1 + \al_2 +\cdots = n$. If a composition $\la$ has its parts sorted in decreasing order, we say that $\la$ is a {\bf partition}  of $n$ and denote $\la \vdash n$. A partition $\la$ is usually represented as a~Young diagram of $n=|\la|$ boxes arranged in $\ell(\la)$ left justified rows so that the $i$-th row from the top contains $\la_i$ boxes. A Young diagram whose boxes are filled with the elements of $[n]$ such that the entries are strictly increasing along the rows and down the columns is called a {\bf standard Young tableau}. The partition given by the number of boxes in each row is its {\bf shape}.  As~an~example the following diagram is a standard Young tableau of shape $\la = (6,4,2,1,1)$.
\small
$$\tableau{  1& 3 & 4 & 8 & 9  & 14\\
 2 & 6 & 7 & 10\\
5 & 12  \\
11\\ 13 \\
}$$
\normalsize
Compositions of $n$ are in natural bijection with subsets of $[n-1]$. For $\al \vDash n$ and $I = \{i_1, i_2, \cdots, i_m\}$ a subset of $[n-1]$ such that $i_1 < i_2<\cdots i_m$, we denote $set(\al) = \{\al_1, \al_1 + \al_2, \cdots,\al_1 + \al_2 + \cdots \al_{p-1}\}$ and $comp(I) = (i_1, i_2-i_1,\cdots,i_m-i_{m-1}, n-i_m)$ this bijection and its inverse. As a result the number of compositions of $n$ is $2^{n-1}$.\\
One important feature of a permutation $\pi$ of $S_n$ is its {\bf descent set}, i.e the subset of $[n-1]$ defined by $set(\pi) = \{1\leq i \leq n-1\mid \pi(i)>\pi(i+1)\}$. For any subset $I \subseteq [n-1]$, denote $D_I$ (resp. $B_I$) the subset of $S_n$ containing all the permutations $\pi$ such that $set(\pi) = I$ (resp. $set(\pi) \subseteq I$). Similarly we define the {\bf descent set of a standard Young tableau} $T$ as the subset of $[n-1]$ defined by $set(T) = \{1\leq i \leq n-1\mid i $ is in a strictly higher row than $i+1\}$. For instance the descent set of the tableau in the previous example is $\{1,4,9,10,12\}.$ We denote $d_{\la I}$ the number of standard Young tableaux of shape $\la$ and descent set $I$.\\
By abuse of notation, we also denote $D_I$ (resp. $B_I$) the element of the algebra $\mathbb{C}S_n$ defined as the formal sum of the permutations it contains. In the more general context of finite Coxeter groups, Solomon showed in \cite{Sol76} that the $D_I$'s (resp. $B_I$'s) generate a subalgebra of $\mathbb{C}S_n$ of dimension $2^{n-1}$ usually referred to as the {\bf Solomon's descent algebra}. \\ More specifically, he showed that there exists a collection of (non-negative integral) {\bf structure constants} $(a^K_{IJ})_{I,J,K\subseteq[n-1]}$ and \linebreak $(b^K_{IJ})_{I,J,K\subseteq[n-1]}$ such that
\begin{align*}
D_ID_J = \sum_{K\subseteq[n-1]}a^K_{IJ}D_K,\;\;\;\;\;\;\;\;\; B_IB_J = \sum_{K\subseteq[n-1]}b^K_{IJ}B_K.
\end{align*}
As a result the number of ways to write a fixed permutation $\pi$ of $D_K$ as the ordered product of two permutations $\pi = \sigma\tau$ such that $\sigma \in D_I$ and $\tau \in D_J$ depends only on $set(\pi) = K$ and is equal to $a^K_{IJ}$. If we require instead $\sigma$ and $\tau$ to be respectively in $B_I$ and $B_J$ then the number of such products is $\sum_{K' \supseteq K} b_{I J}^{K'}$.
\begin{rem} The two families of structure constants are linked through the formula
\begin{equation}
\label{eq : ab}\sum_{I' \subseteq I, J' \subseteq J}a^K_{I'J'} = \sum_{K' \supseteq K} b_{I J}^{K'}. 
\end{equation}
\end{rem}
Because of its important combinatorial and algebraical properties the descent algebra received significant attention afterwards. In particular, Garsia and Reutenauer in \cite{GarReu89} provide a decomposition of its multiplicative structure and a new combinatorial interpretation of the coefficients $b^K_{IJ}$ in terms of the number of non-negative integer matrices with specified constraints. Bergeron and Bergeron give analogue results in \cite{BerBer92} when the symmetric group is replaced by the hyperoctahedral group (Coxeter group of type B). As shown by Norton in  \cite{Nor79}, the descent algebra of a finite Coxeter group is also strongly related to its {\it $\it 0$-Hecke algebra}. In particular, she proves that the dimension of each left principal indecomposable module of the $0$-Hecke algebra is equal to the cardinality of the analogue of one the $D_I$'s. She further shows that the analogues of the coefficients $a^K_{IJ}$ for $K = \emptyset$ are the entries of the {\it Cartan matrix} giving the number of times each irreducible module is a composition factor of each indecomposable module. In the specific case of the symmetric group $S_n$, Carter in \cite{Car86} uses the celebrated {\it Robison-Schensted (RS)} correspondence to explain the following relation obtained by computation of the Cartan matrix
\begin{equation}
\label{eq : car}a^\emptyset_{IJ} = \sum_{\la \vdash n}d_{\la I}d_{\la J}.
\end{equation}
Finally, Solomon's descent algebra is dual to the Hopf algebra of {\it quasisymmetric functions} (see Section~\ref{sec : typeA}) introduced by Gessel in \cite{Ges84}. In particular, he shows that the comultiplication table for their fundamental basis is given by the $a^K_{IJ}$'s.\\
Using the decomposition of Schur symmetric functions in terms of fundamental quasisymmetric functions one can easily show that Equation~(\ref{eq : car}) is a direct consequence of Gessel's result. In Section~\ref{sec : typeA}, we review this property and show that the consequences of Gessel's result also include a more general form of Equation~(\ref{eq : car}) relating the structure constants and the {\it Kronecker coefficients} as well as the equivalent of Equation~(\ref{eq : car}) for the generalisation of the RS correspondence provided by Knuth in \cite{Knu70} (RSK correspondence). Then we use the theory of quasisymmetric functions of type B introduced by Chow (\cite{Cho01}) in Section~\ref{sec : typeB} to show how these results transpose to the descent algebra of the hypercotahedral group. 

\section{Descent algebra of the symmetric group}
\label{sec : typeA}
\subsection{Quasisymmetric functions}
Let $X =\{x_1,x_2,\cdots,x_i,\cdots\}$ be a totally ordered set of commutative indeterminates. As introduced by Gessel in \cite{Ges84} a quasisymmetric function is a bounded degree formal power series in $\mathbb{C}[X]$ such that for any composition $(\alpha_1, \cdots \alpha_p)$ and any strictly increasing sequence of distinct indices $i_1 < i_2 < \dots < i_p$ the coefficient of $x_1^{\alpha_1}  x_2^{\alpha_2}  \cdots  x_p^{\alpha_p}$ is equal to the coefficient of $x_{i_1}^{\alpha_1}  x_{i_2}^{\alpha_2}  \cdots  x_{i_p}^{\alpha_p}$. In particular all symmetric functions on $X$ are quasisymmetric. Quasisymmetric functions are naturally indexed by compositions of $n$ and admit the monomial and fundamental quasisymmetric functions as classical bases.
$$M_{\alpha}(X) = \sum\limits_{i_1 < \cdots < i_p} x_{i_1}^{\alpha_1} x_{i_2}^{\alpha_2} \dots x_{i_p}^{\alpha_p},$$ $$F_{\alpha}(X) = \sum\limits_{\substack{i_1 \leq \cdots \leq i_n\\k\in set(\al) \Rightarrow i_{k+1} > i_k}} x_{i_1}x_{i_2} \cdots x_{i_n}.$$
These two bases are related through
\begin{equation}
\label{eq : FM} F_{\alpha}(X) = \sum_{set(\alpha) \subseteq set(\beta)\subseteq[n-1]} M_{\beta}(X).
\end{equation}
For $\la \vdash n$ denote also 
%$m_\lambda(X)$ the monomial symmetric function indexed by partition $\lambda$, 
$p_\lambda(X)$ and $s_\lambda(X)$  the power sum and Schur symmetric functions indexed by partition $\la$.
% As a consequence of the definition of monomial quasisymmetric functions $$m_\la(X) = \sum_{\widetilde{\al} = \la} M_\al(X),$$ where $\widetilde{\al}$ is the partition obtained from $\al$ by reordering its parts in decreasing order. 
The decomposition of Schur symmetric functions in the fundamental basis (see e.g. \cite[7.19.7]{Sta01}) is given by

\begin{equation}
\label{eq : SdF}s_\la(X) = \sum_{\al \vDash n}d_{\la set(\al)}F_\al(X).
\end{equation}

A {\bf semistandard Young tableau} is a Young diagram whose entries are strictly increasing down the column and non-decreasing along the rows. A tableau is said to have type $\mu = (\mu_1, \mu_2, \cdots)$ if it has $\mu_i$ entries equal to $i$. Denote $K_{\la \mu}$ the number of semistandard Young tableaux of shape $\la$ and type $\mu$. The decomposition of Schur symmetric functions in the monomial basis is

\begin{equation}
\label{eq : SKM}s_\la(X) = \sum_{\al \vDash n}K_{\la \al}M_\al(X).
\end{equation}

%where $\widetilde{\al}$ is the partition obtained from $\al$ by reordering its parts in decreasing order.
In what follows, using the natural correspondence between compositions and subsets we may denote $M_I$ and $F_I$ instead of $M_{comp(I)}$ and $F_{comp(I)}$ for $I \subseteq [n-1]$ and remove the reference to indeterminate $X$ when there is no confusion.
\subsection{Generating series and RSK-correspondence}
For two commutative sets of variables $X =\{x_1,x_2,\cdots,x_i,\cdots\}$ and $Y =\{y_1,y_2,\cdots,y_i,\cdots\}$, we denote $XY$ the set of indeterminates $\{(x_i, y_j); x_i \in X, y_i \in Y\}$ ordered by the lexicographical order. Gessel shows in \cite{Ges84} that for any subset $K \subseteq [n-1]$
\begin{equation}
\label{eq : Faff}F_K(XY) = \sum_{I,J \subseteq [n-1]}a_{IJ}^KF_I(X)F_J(Y).
\end{equation}
We can say that $F_K(XY)$ is the generating series for the coefficients $a_{IJ}^K$.\\
As stated in introduction, Equation (\ref{eq : car}) is a direct consequence of Equation (\ref{eq : Faff}). Indeed according to Equation~(\ref{eq : SdF}) $F_\emptyset = s_{(n)}$. Then using the {\it Cauchy identity} for Schur functions and applying Equation~(\ref{eq : SdF}) once again one gets:
\begin{align*}
\sum_{I,J \subseteq [n-1]}a_{IJ}^\emptyset F_I(X)F_J(Y) &=  F_\emptyset(XY)=s_{(n)}(XY),\\
&=\sum_{\substack{\la \vdash n\\ \phantom{I,J \subseteq [n-1]}}}s_\la(X)s_\la(Y),\\
&=\sum_{\substack{\la \vdash n\\ I,J \subseteq [n-1]}}d_{\la I}d_{\la J}F_I(X)F_J(Y).
\end{align*}
Denote $\la'$ the partition corresponding to the transposed Young diagram of $\la$. We have the following similar lemma:
\begin{lem}
For $I,J \subseteq [n-1]$, the numbers $a_{IJ}^{[n-1]}$ are given by
\begin{equation}
a_{IJ}^{[n-1]} = \sum_{\la \vdash n} d_{\la I}d_{\la' J}.
\end{equation}
\end{lem}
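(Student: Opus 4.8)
The plan is to mimic exactly the derivation of Equation~(\ref{eq : car}) from Equation~(\ref{eq : Faff}), but starting from the generating series $F_{[n-1]}(XY)$ instead of $F_\emptyset(XY)$. First I would identify which Schur function $F_{[n-1]}$ equals: since $comp([n-1]) = (1,1,\dots,1)$ is the composition with all parts $1$, the only standard Young tableau with full descent set $[n-1]$ is the single-column tableau, so by Equation~(\ref{eq : SdF}) we get $F_{[n-1]} = s_{(1^n)}$, the Schur function of the column shape. Plugging $K = [n-1]$ into Equation~(\ref{eq : Faff}) gives
\begin{align*}
\sum_{I,J \subseteq [n-1]} a_{IJ}^{[n-1]} F_I(X) F_J(Y) = F_{[n-1]}(XY) = s_{(1^n)}(XY).
\end{align*}

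The key step is then to expand $s_{(1^n)}(XY)$ in the fundamental basis of each of $X$ and $Y$. Here I would use the dual Cauchy identity (the $e$-version of the Cauchy identity), which states that $\prod_{i,j}(1 + x_i y_j) = \sum_{\la} s_\la(X) s_{\la'}(Y)$; extracting the degree-$n$ part in each alphabet yields $s_{(1^n)}(XY) = \sum_{\la \vdash n} s_\la(X) s_{\la'}(Y)$, because $(1^n)$ is precisely the shape such that $s_{(1^n)}(XY)$ collects the degree-$(n,n)$ terms of $\prod(1+x_iy_j)$. (One should double-check the combinatorial meaning of the product alphabet $XY$ here: the lexicographic order on pairs makes $s_{(1^n)}(XY)$ behave like the elementary symmetric/dual-Cauchy situation rather than the ordinary Cauchy one, which is exactly why the transpose $\la'$ appears.) Then applying Equation~(\ref{eq : SdF}) to both $s_\la(X)$ and $s_{\la'}(Y)$ gives
\begin{align*}
\sum_{I,J \subseteq [n-1]} a_{IJ}^{[n-1]} F_I(X) F_J(Y) = \sum_{\substack{\la \vdash n \\ I,J \subseteq [n-1]}} d_{\la I} d_{\la' J} F_I(X) F_J(Y).
\end{align*}

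Finally, I would invoke linear independence of the products $F_I(X) F_J(Y)$ (the fundamental quasisymmetric functions in $X$ are linearly independent, and likewise in $Y$, so their pairwise products are independent in the tensor product, at least when the alphabets $X$ and $Y$ are infinite) to equate coefficients on both sides, yielding $a_{IJ}^{[n-1]} = \sum_{\la \vdash n} d_{\la I} d_{\la' J}$ as claimed.

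The main obstacle I anticipate is justifying the identity $s_{(1^n)}(XY) = \sum_{\la \vdash n} s_\la(X) s_{\la'}(Y)$ cleanly in the quasisymmetric setting: one must be careful that $F_{[n-1]}(XY) = s_{(1^n)}(XY)$ really does correspond to the dual Cauchy kernel under the product-alphabet convention used in Equation~(\ref{eq : Faff}), rather than to $s_{(1^n)}$ evaluated via the ordinary Cauchy identity (which would give $\la$ twice, not $\la$ and $\la'$). A clean way to see this: $F_{[n-1]}$ is the fundamental quasisymmetric function indexed by the all-ones composition, and $F_{(1^n)}(XY) = e_n(XY)$ where $e_n$ is the $n$-th elementary symmetric function; the generating-function identity $\sum_n e_n(XY) = \prod_{i,j}(1+x_iy_j) = \sum_\la s_\la(X)s_{\la'}(Y)$ then gives the degree-$n$ component directly. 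Everything else is a routine transcription of the $a^\emptyset_{IJ}$ argument already spelled out in the excerpt.
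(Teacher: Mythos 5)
Your proposal is correct and follows essentially the same route as the paper: identify $F_{[n-1]}=s_{(1^n)}$ via Equation~(\ref{eq : SdF}), apply the dual (alternative) Cauchy identity $s_{(1^n)}(XY)=\sum_{\la\vdash n}s_\la(X)s_{\la'}(Y)$, and expand both Schur functions back into the fundamental basis. The extra care you take in justifying the dual Cauchy identity via $e_n(XY)$ and $\prod_{i,j}(1+x_iy_j)$ is sound but not needed beyond what the paper cites.
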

\begin{proof} According to Equation~(\ref{eq : SdF}) $F_{[n-1]} = s_{(1^n)}$. Then use the alternative Cauchy identity for Schur functions $$s_{(1^n)}(XY) = \sum_{\la \vdash n}s_\la(X)s_{\la'}(Y)$$ and apply Equation~(\ref{eq : SdF}) to $s_\la$ and $s_{\la'}$.\end{proof}
Generalising the RS-correspondence to matrices with non-negative integral entries, Knuth (\cite{Knu70}) proved that for $r, c \vDash n$ the number $m_{r, c}$ of such matrices with row and column sums equal respectively to $r$ and $c$ is given by
\begin{equation}
\label{eq : mKK} m_{r, c} = \sum_{\la \vdash n}K_{\la r}K_{\la c}.
\end{equation}
\begin{thm} \label{lem : mKK} Equation~(\ref{eq : mKK}) is also a consequence of Equation~(\ref{eq : Faff}). \end{thm}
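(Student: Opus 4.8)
The plan is to run the same argument that yields Equation~(\ref{eq : car}), but to read the resulting identity off in the monomial quasisymmetric basis rather than the fundamental one. First, exactly as in the discussion preceding the lemma, Equation~(\ref{eq : Faff}) taken at $K=\emptyset$, combined with the Cauchy identity for Schur functions and with Equation~(\ref{eq : SdF}) (which gives $F_\emptyset=s_{(n)}$ and rewrites each $\sum_I d_{\la I}F_I$ as $s_\la$), produces the identity
\[
s_{(n)}(XY)=F_\emptyset(XY)=\sum_{\la\vdash n}s_\la(X)s_\la(Y).
\]
This is the point at which Equation~(\ref{eq : Faff}) enters; everything below is a comparison of coefficients in this single identity.

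Next I would extract, from both sides, the coefficient of the monomial $x_1^{\al_1}\cdots x_p^{\al_p}\,y_1^{\be_1}\cdots y_q^{\be_q}$ for an arbitrary pair of compositions $\al=(\al_1,\dots,\al_p)\vDash n$ and $\be=(\be_1,\dots,\be_q)\vDash n$ (equivalently, I would pass to the monomial quasisymmetric basis, in which this monomial is the lowest term of $M_\al(X)M_\be(Y)$). On the left, the coefficient of $x_1^{\al_1}\cdots x_p^{\al_p}$ in $s_\la(X)$ is $K_{\la\al}$, the semistandard tableau count recorded by Equation~(\ref{eq : SKM}), so the left-hand side contributes $\sum_{\la\vdash n}K_{\la\al}K_{\la\be}$. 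On the right, using $s_{(n)}(XY)=h_n(XY)$ together with $\prod_{i,j}(1-x_iy_j)^{-1}=\sum_M\prod_{i,j}(x_iy_j)^{m_{ij}}$, the sum running over all non-negative integer matrices $M=(m_{ij})$, the coefficient of $x_1^{\al_1}\cdots x_p^{\al_p}\,y_1^{\be_1}\cdots y_q^{\be_q}$ in the degree $n$ component is the number of $p\times q$ non-negative integer matrices with row sums $\al$ and column sums $\be$, that is $m_{\al,\be}$.

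Equating the two coefficients gives $m_{\al,\be}=\sum_{\la\vdash n}K_{\la\al}K_{\la\be}$ for all $\al,\be\vDash n$, which is Equation~(\ref{eq : mKK}) with $r=\al$ and $c=\be$. The step that calls for genuine care, and which I expect to be the main obstacle, is the right-hand extraction: one must verify that the coefficient of the particular monomial $x^\al y^\be$ in the Cauchy kernel is indeed the matrix count $m_{\al,\be}$, and that it is indexed by the compositions $\al$ and $\be$ themselves and not merely by their sorted partitions --- this comes down to the bookkeeping identifying a weakly increasing length $n$ word in the lexicographically ordered variable set $XY$ with a non-negative integer matrix, which is exactly the combinatorics underlying Knuth's corollary. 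The remaining steps are a routine assembly of Equations~(\ref{eq : Faff}), (\ref{eq : SdF}) and~(\ref{eq : SKM}).
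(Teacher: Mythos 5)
Your computation is mathematically sound as a derivation of the identity in Equation~(\ref{eq : mKK}), but it does not establish the theorem as stated, because Equation~(\ref{eq : Faff}) ends up doing no work in it. The only identity you actually compare coefficients in is $s_{(n)}(XY)=\sum_{\la\vdash n}s_\la(X)s_\la(Y)$, and that is the Cauchy identity pure and simple: it follows from Equation~(\ref{eq : SdF}) applied to the alphabet $XY$ together with the classical kernel expansion, with no reference to the structure constants $a_{IJ}^K$. Your left-hand extraction gives $\sum_\la K_{\la\al}K_{\la\be}$ via Equation~(\ref{eq : SKM}), your right-hand expansion of $h_n(XY)$ gives $m_{\al,\be}$, and equating them is the standard generating-function proof of Knuth's identity; the first paragraph of your argument, where Equation~(\ref{eq : Faff}) is quoted, could be deleted without affecting anything that follows. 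Since neither side of Equation~(\ref{eq : mKK}) mentions the structure constants, the theorem only has content if the derivation genuinely routes through them, as it does for Equation~(\ref{eq : car}); an argument in which the $a_{IJ}^\emptyset$ never reappear does not show that the identity is a consequence of Gessel's formula.

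The paper keeps the structure constants in play on both sides of the comparison. Setting $A_{I,J}^{\emptyset}=\sum_{I'\subseteq I,\,J'\subseteq J}a_{I'J'}^{\emptyset}$, it uses Equation~(\ref{eq : FM}) to show that $A_{I,J}^{\emptyset}$ is the coefficient of $M_I(X)M_J(Y)$ in $F_\emptyset(XY)=\sum_{I',J'}a_{I'J'}^{\emptyset}F_{I'}(X)F_{J'}(Y)$; the Cauchy identity and Equation~(\ref{eq : SKM}) then give $A_{I,J}^\emptyset=\sum_\la K_{\la\,comp(I)}K_{\la\,comp(J)}$. The step you are missing is the second identification of this same quantity: by Equation~(\ref{eq : ab}) one has $A_{I,J}^\emptyset=\sum_{K}b_{IJ}^K$, and by the Garsia--Reutenauer combinatorial interpretation each $b_{IJ}^K$ counts the non-negative integer matrices with row sums $comp(I)$, column sums $comp(J)$ and reading word $comp(K)$, so that summing over $K$ yields exactly $m_{comp(I),comp(J)}$. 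Your direct expansion of the Cauchy kernel as a sum over matrices silently replaces this step, and that is precisely the point at which Equation~(\ref{eq : Faff}) and the descent algebra are supposed to intervene. To repair the proof, read the coefficient of $x^\al y^\be$ on the $\sum_{I,J}a_{IJ}^\emptyset F_I(X)F_J(Y)$ side as $A_{set(\al),set(\be)}^\emptyset$ and supply the combinatorial identification of that aggregate with $m_{\al,\be}$.
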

\begin{proof} Denote $A_{I,J}^{K} = \sum_{I' \subseteq I,J' \subseteq J}a_{I',J'}^K$. Then according to Equation~(\ref{eq : FM})
\begin{align*}
\sum_{I',J' \subseteq [n-1]}\hspace{-4mm} a_{I'J'}^KF_{I'}(X)F_{J'}(Y) &= \sum_{I',J' \subseteq [n-1]}a_{I'J'}^K\sum_{I \supseteq I', J \supseteq J'}M_{I}(X)M_{J}(Y),\\
&= \sum_{I,J \subseteq [n-1]}M_{I}(X)M_{J}(Y)\sum_{I' \subseteq I, J' \subseteq J}a_{I'J'}^K,\\
& = \sum_{I,J \subseteq [n-1]}A_{I,J}^KM_{I}(X)M_{J}(Y).
\end{align*}
As a result, $A_{I,J}^\emptyset$ is the coefficient in $M_{I}(X)M_{J}(Y)$ of $F_{\emptyset}(XY)$ which we know is equal to $\sum_{\la \vdash n}s_\la(X)s_{\la}(Y)$ (see above). Finally use Equation~(\ref{eq : SKM}) to get
\begin{equation}
A_{I,J}^\emptyset = \sum_{\la \vdash n}K_{\la\, comp(I)}K_{\la\, comp(J)}.
\end{equation}
It remains to prove that $A_{I,J}^\emptyset  = m_{comp(I),comp(J)}$. The combinatorial interpretation of \cite{GarReu89} states that $b_{IJ}^K$ is the number of non-negative integer matrices $M$ with row and column sums equal to $comp(I)$ and $comp(J)$ respectively and with the word obtained by reading the entries of $M$ row by row from top to bottom equal to $comp(K)$ (zero entries being omitted). But according to Equation~(\ref{eq : ab}) $A_{I,J}^\emptyset = \sum_{K \subseteq [n-1]}b_{IJ}^K$. %We get the desired result. 
\end{proof}
%\begin{rem}
%Using a direct computation (left to the reader) of the number of permutations $\pi$ in $S_n$ with descent set  $set(\pi) \subseteq I$ and inverse descent set $set(\pi^{-1}) \subseteq J$ one can prove the last part of Theorem~\ref{lem : mKK} without the interpretation of the coefficients $b_{IJ}^K$ of \cite{GarReu89}.
%\end{rem}
%
\subsection{Relation to Kronecker coefficients}
We look at a more general version of Equations (\ref{eq : car}) and (\ref{eq : mKK}). Let $a_{I,J,K}^\emptyset = [D_{\emptyset}]D_ID_JD_K$ be the number of triples of permutations $\sigma_1$, $\sigma_2$, $\sigma_3$ of $S_n$ such that $set(\sigma_1) = I$, $set(\sigma_2) = J$, $set(\sigma_3) = K$ and $\sigma_1\sigma_2\sigma_3 = id_n$. Denote by $\chi^\la$ the irreducible character of $S_n$ indexed by partition $\la$.  For any $\la, \mu, \nu \vdash n$ define the {\bf Kronecker coefficient} $g(\la,\mu,\nu) = \frac{1}{n!}\sum_{\omega \in S_n}\chi^\la(\omega)\chi^\mu(\omega)\chi^\nu(\omega).$ We have the following result
\begin{thm}
\label{thm : gencartan}
Let $I,J$ and $K$ be subsets of $[n-1]$. The coefficient $a_{I,J,K}^\emptyset$ verifies
\begin{equation}
\label{eq : gencartan}a_{I,J,K}^\emptyset = \sum_{\la, \mu, \nu \vdash n}g(\la,\mu,\nu)d_{\la I}d_{\mu J}d_{\nu K}.
\end{equation}
\end{thm}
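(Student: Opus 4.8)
The plan is to mirror the derivations of (\ref{eq : car}) and (\ref{eq : mKK}) given above: first produce a three–alphabet analogue of Gessel's generating series (\ref{eq : Faff}), and then evaluate its left-hand side by means of Schur function identities. Concretely, I would first show that for the product alphabet $XYZ$
\begin{equation*}
F_\emptyset(XYZ) = \sum_{I,J,K \subseteq [n-1]} a^\emptyset_{I,J,K}\, F_I(X)\,F_J(Y)\,F_K(Z).
\end{equation*}
Associativity of the descent algebra gives $D_ID_JD_K = \sum_{L} a^L_{IJ} D_L D_K$, hence $a^\emptyset_{I,J,K} = \sum_{L\subseteq[n-1]} a^L_{IJ}\, a^\emptyset_{LK}$; applying (\ref{eq : Faff}) twice then yields
\begin{align*}
\sum_{I,J,K} a^\emptyset_{I,J,K}\, F_I(X)F_J(Y)F_K(Z)
&= \sum_{L,K} \Big(\sum_{I,J} a^L_{IJ}\, F_I(X)F_J(Y)\Big)\, a^\emptyset_{LK}\, F_K(Z) \\
&= \sum_{L,K} a^\emptyset_{LK}\, F_L(XY)\,F_K(Z) = F_\emptyset((XY)Z) = F_\emptyset(XYZ).
\end{align*}

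Next I would evaluate $F_\emptyset(XYZ)$. Since $F_\emptyset = s_{(n)}$, we have $F_\emptyset(XYZ) = s_{(n)}(XYZ)$; regarding $XYZ$ as the product of the alphabet $X$ with the alphabet $YZ$ and using the Cauchy identity gives $s_{(n)}(XYZ) = \sum_{\la\vdash n} s_\la(X)\,s_\la(YZ)$. I would then invoke the classical expansion of a Schur function evaluated on a product alphabet, $s_\la(YZ) = \sum_{\mu,\nu\vdash n} g(\la,\mu,\nu)\, s_\mu(Y)\,s_\nu(Z)$, where $g$ is precisely the Kronecker coefficient of the statement (this is the symmetric-function form of the internal product of Schur functions; $g$ is symmetric in its three arguments directly from its definition as an average of a product of three irreducible characters). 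Combining these and then expanding every Schur function by (\ref{eq : SdF}) gives
\begin{equation*}
F_\emptyset(XYZ) = \sum_{\substack{\la,\mu,\nu\vdash n\\ I,J,K\subseteq[n-1]}} g(\la,\mu,\nu)\, d_{\la I}\,d_{\mu J}\,d_{\nu K}\, F_I(X)\,F_J(Y)\,F_K(Z).
\end{equation*}

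Finally I would compare the two expressions obtained for $F_\emptyset(XYZ)$. Because $\{F_I\}_{I\subseteq[n-1]}$ is a basis of the degree-$n$ quasisymmetric functions in each of the alphabets $X$, $Y$, $Z$, the products $F_I(X)F_J(Y)F_K(Z)$ are linearly independent, so equating coefficients of $F_I(X)F_J(Y)F_K(Z)$ yields exactly (\ref{eq : gencartan}). The step I expect to need the most care is the first one: one must be sure that iterating (\ref{eq : Faff}) is legitimate, i.e. that $F_\emptyset((XY)Z)$ and $F_\emptyset(X(YZ))$ really coincide and both equal the displayed triple sum. This holds because the lexicographic orders on $(XY)Z$ and on $X(YZ)$ induce the same total order on the triples $(x_i,y_j,z_k)$, so $F_\emptyset$ takes the same value on both; equivalently it is the coassociativity of the quasisymmetric coproduct, whose fundamental-basis matrix is (\ref{eq : Faff}). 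Everything else amounts to routine substitution of standard Schur function identities.
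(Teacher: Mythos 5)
Your proof is correct and follows essentially the same route as the paper: extend Gessel's identity (\ref{eq : Faff}) to three alphabets, identify $F_\emptyset(XYZ)=s_{(n)}(XYZ)$, expand via the generalised Cauchy identity $s_{(n)}(XYZ)=\sum_{\la,\mu,\nu}g(\la,\mu,\nu)s_\la(X)s_\mu(Y)s_\nu(Z)$, and finish with (\ref{eq : SdF}). You merely supply details the paper leaves implicit (the associativity argument for the multi-index coefficients and the two-step derivation of the generalised Cauchy identity from the ordinary one plus the internal product), which is a welcome but not substantively different elaboration.
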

\begin{proof} The result of Gessel obviously extends to multi-index coefficients. In particular, the $a_{I,J,K}^{\emptyset}$'s verify
%\begin{equation}
%\label{eq : Faffgen}F_K(\prod_iX^i) = \sum_{I^1,\cdots,I^p \subseteq [n-1]}a_{I^1,\cdots,I^p}^K\prod_{i}F_{I^i}(X^i).
%\end{equation}
\begin{align*}
\sum_{I, J,K}a_{I,J,K}^{\emptyset}F_I(X)F_J(Y)F_K(Z) &= F_{\emptyset}(XYZ) = s_{(n)}(XYZ).
\end{align*}
Then use the generalised Cauchy identity (see \cite[I.7 Equation (7.9)]{Mac99}) for Schur functions to deduce that $$s_{(n)}(XYZ) = \sum_{\la, \mu, \nu}g(\la,\mu,\nu)s_\la(X)s_\mu(Y)s_\nu(Z)$$ and apply Equation~(\ref{eq : SdF}) to get Equation~(\ref{eq : gencartan}). 
\end{proof}

For $p, q, r \vDash n$ denote $m_{p,q,r}$ the number of three-dimensional arrays $M = (M_{i,j,k})$ with non-negative integer entries such that $p_k = \sum_{i,j}M_{i,j,k}$, $q_j =\sum_{i,k}M_{i,j,k}$ and $r_i =\sum_{j,k}M_{i,j,k}$. Similarly as Theorem~\ref{lem : mKK}, we have the following corollary to Theorem \ref{thm : gencartan}.
\begin{cor}
Let $p, q$ and $r$ be three compositions of integer $n$. The quantity $m_{p,q,r}$ defined above is given by
\begin{equation}
\label{eq  : mKKK} m_{p,q,r} = \sum_{\la, \mu, \nu \vdash n}g(\la,\mu,\nu)K_{\la p}K_{\mu q}K_{\nu r}.
\end{equation}
\end{cor}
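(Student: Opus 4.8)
The plan is to run the argument of Theorem~\ref{lem : mKK} one factor higher. Set $A_{I,J,K}^{\emptyset}=\sum_{I'\subseteq I,\,J'\subseteq J,\,K'\subseteq K}a_{I',J',K'}^{\emptyset}$. Starting from the three-variable identity $\sum_{I,J,K}a_{I,J,K}^{\emptyset}F_I(X)F_J(Y)F_K(Z)=F_\emptyset(XYZ)$ obtained in the proof of Theorem~\ref{thm : gencartan}, I would expand every $F$ into monomial quasisymmetric functions via Equation~(\ref{eq : FM}), exactly as in the first display of the proof of Theorem~\ref{lem : mKK}; this shows that $A_{I,J,K}^{\emptyset}$ is the coefficient of $M_I(X)M_J(Y)M_K(Z)$ in $F_\emptyset(XYZ)$. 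Since $F_\emptyset(XYZ)=s_{(n)}(XYZ)=\sum_{\la,\mu,\nu\vdash n}g(\la,\mu,\nu)\,s_\la(X)s_\mu(Y)s_\nu(Z)$ by the generalised Cauchy identity used in that same proof, Equation~(\ref{eq : SKM}) then yields
\[
A_{I,J,K}^{\emptyset}=\sum_{\la,\mu,\nu\vdash n}g(\la,\mu,\nu)\,K_{\la\,comp(I)}\,K_{\mu\,comp(J)}\,K_{\nu\,comp(K)}.
\]
Hence the right-hand side of~(\ref{eq  : mKKK}) equals $A_{I,J,K}^{\emptyset}$ once $(p,q,r)=(comp(I),comp(J),comp(K))$, and the whole statement reduces to proving $A_{I,J,K}^{\emptyset}=m_{comp(I),comp(J),comp(K)}$.

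Next I would read $A_{I,J,K}^{\emptyset}$ inside $\mathbb{C}S_n$. Since $B_I=\sum_{I'\subseteq I}D_{I'}$, we have $B_IB_JB_K=\sum_{I'\subseteq I,\,J'\subseteq J,\,K'\subseteq K}D_{I'}D_{J'}D_{K'}$, so $A_{I,J,K}^{\emptyset}$ is the coefficient of $id_n$ in $B_IB_JB_K$. Expanding $B_IB_J=\sum_L b_{IJ}^{L}B_L$ and then $B_LB_K=\sum_N b_{LK}^{N}B_N$, and using that every $B_N$ contains $id_n$ with coefficient $1$ (because $set(id_n)=\emptyset\subseteq N$), I obtain
\[
A_{I,J,K}^{\emptyset}=\sum_{L}b_{IJ}^{L}\sum_{N}b_{LK}^{N}=\sum_{L}b_{IJ}^{L}\,m_{comp(L),\,comp(K)},
\]
the last equality being the two-factor statement $\sum_{N}b_{LK}^{N}=m_{comp(L),comp(K)}$ already proved in Theorem~\ref{lem : mKK}.

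The remaining, and only genuinely new, step is the combinatorial identity $\sum_{L}b_{IJ}^{L}\,m_{comp(L),comp(K)}=m_{comp(I),comp(J),comp(K)}$, which I would prove bijectively. By the Garsia--Reutenauer interpretation recalled in the proof of Theorem~\ref{lem : mKK}, $b_{IJ}^{L}$ counts the non-negative integer matrices $P$ with row sums $comp(I)$, column sums $comp(J)$, and whose nonzero entries read row by row from top to bottom spell $comp(L)$; while $m_{comp(L),comp(K)}$ counts the non-negative integer matrices $Q$ with row sums $comp(L)$ and column sums $comp(K)$, so the rows of $Q$ are naturally indexed by the nonzero cells of $P$ taken in reading order, the $t$-th such row summing to the $t$-th nonzero entry of $P$. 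To such a pair $(P,Q)$ I would associate the three-dimensional array $N$ whose entry at $(a,b,c)$ is $Q_{t,c}$ when $(a,b)$ is the $t$-th nonzero cell of $P$, and $0$ when $P_{a,b}=0$; a direct check shows that the three families of line sums of $N$ are $comp(I)$, $comp(J)$ and $comp(K)$, and that $(P,Q)\mapsto N$ is a bijection, its inverse recovering $P_{a,b}=\sum_c N_{a,b,c}$ and then $Q$ from $N$. Since $m_{p,q,r}$ is manifestly symmetric in $p,q,r$ (permuting the arguments only transposes coordinates of the array), this gives $A_{I,J,K}^{\emptyset}=m_{comp(I),comp(J),comp(K)}=m_{p,q,r}$, which is Equation~(\ref{eq  : mKKK}). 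I expect the main obstacle to be exactly this gluing bijection — in particular the bookkeeping that matches each of the three families of line sums of $N$ to the correct marginal; every other ingredient is a direct transcription of arguments already present in the paper.
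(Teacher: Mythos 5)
Your proof is correct, and its first half --- defining $A^\emptyset_{I,J,K}$, extracting it as the coefficient of $M_I(X)M_J(Y)M_K(Z)$ in $F_\emptyset(XYZ)$, and evaluating that coefficient via the generalised Cauchy identity together with Equation~(\ref{eq : SKM}) --- is exactly the ``run Theorem~\ref{lem : mKK} one factor higher'' argument the paper has in mind when it presents the statement as a corollary of Theorem~\ref{thm : gencartan} obtained ``similarly as'' Theorem~\ref{lem : mKK}. Where you genuinely diverge is the combinatorial identification $A^\emptyset_{I,J,K}=m_{p,q,r}$: the paper would get this from a three-index extension of the Garsia--Reutenauer interpretation of the $b$-coefficients (and separately points to a fully bijective proof via Littlewood--Richardson tableaux in the literature), whereas you reduce to the already-proved two-index case by associativity, writing $[id_n]B_IB_JB_K=\sum_L b^L_{IJ}\sum_N b^N_{LK}$, and then glue a pair $(P,Q)$ into a three-dimensional array. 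Your gluing map does check out: the three marginals of $N$ come out as $comp(I)$, $comp(J)$ and $comp(K)$ (in the paper's indexing conventions this is $m_{comp(K),comp(J),comp(I)}$, but $m$ is symmetric under permuting its arguments since that only permutes the coordinates of the array, as you note), and the inverse is well defined because a zero cell of $P$ forces the entire fibre of $N$ above it to vanish, so $P_{a,b}=\sum_c N_{a,b,c}$ recovers $P$ and then $Q$. The trade-off is that your route is self-contained --- it needs nothing beyond the two-index Garsia--Reutenauer statement already invoked in the proof of Theorem~\ref{lem : mKK} --- at the cost of verifying one explicit bijection, while the paper's route is shorter on the page but rests on a multi-index generalisation it never states.
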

A bijective proof of Equation(\ref{eq : mKKK}) is provided in \cite{AveVal12}. The proof involves semistandard Young tableaux and {\it Littlewood-Richardson tableaux}. 

\section{Descent algebra of the hyperoctahedral group}
\label{sec : typeB}
\subsection{Signed permutations and domino tableaux}
Let $B_n$ be the {\bf hyperoctahedral group} of order $n$. $B_n$ is composed of all permutations $\pi$ on the set $\{\text{-}n, \cdots,\text{-}2, \text{-}1, 0, 1, 2, \cdots, n\}$ such that for all $i$ in $\{0\}\cup[n]$, $\pi(-i) = -\pi(i)$ (in particular $\pi(0) = 0$). As a result, such permutations usually referred to as {\bf signed permutations} are entirely described by their restriction to $[n]$. The {\bf descent set} of a signed permutation $\pi$ of $B_n$ is the subset of $\{0\}\cup[n-1]$ defined by $set(\pi) = \{0 \leq i \leq n-1 \mid \pi(i) > \pi(i+1)\}$. The main difference with respect to the case of the symmetric group is the possible descent in position $0$ when $\pi(1)$ is a negative integer. There is a natural analogue of the RSK-correspondence for signed permutations involving {\bf domino tableaux}. For $\la \vdash 2n$, a {\bf standard domino tableau} $T$ of shape $sh(T) = \la$ is a semistandard Young tableau filled with the elements of $[n]$ such that each integer is used exactly twice and that for all $i$ in $[n]$, the two occurrences of $i$ are adjacent to each other. Merge the two squares labelled $i$ to get a rectangle with a single label $i$. We name such a rectangle a {\bf domino}. A domino can be either vertical (if the two $i$'s were on the same column) or horizontal. Note that not all the shapes $\la \vdash 2n$ may be entirely tiled by dominos (for example $\la =(3,2,1)$ may not). In the sequel we consider only the set $\mathcal{P}^0(n)$ of {\it empty $2$-core partitions} $\la \vdash 2n$ that fit such a tiling. Barbash and Vogan (\cite{BarVog82}) built a bijection between signed permutations of $B_n$ and pairs of standard domino tableaux of equal shape in $\mathcal{P}^0(n)$.\\
 A standard domino tableau $T$ has a descent in position $i>0$ if $i+1$ lies strictly below $i$ in $T$ and has descent in position $0$ if the domino filled with $1$ is vertical. We denote $set(T)$ the set of all its descents. For $\la$ in $\mathcal{P}^0(n)$ and $I \subseteq \{0\}\cup[n-1]$, $d^B_{\la I}$ is the number of standard domino tableaux of shape $\la$ and descent set $I$.
\begin{exm}
The domino in Figure~(\ref{fig : dom}) has shape $(5,5,4,1,1)$ and descent set \{0,3,5,6\}.
\begin{figure} [h]
\begin{center}
\includegraphics[width=2cm]{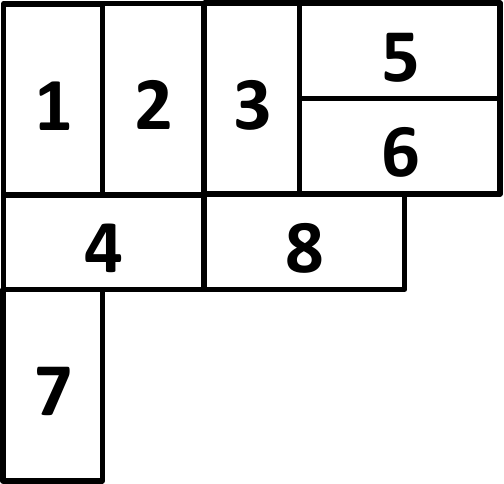}
\caption{A standard domino tableau}
\label{fig : dom}
\end{center}
\end{figure}
\end{exm}
Ta\c{s}kin (\cite{Tas12}) showed that the two standard domino tableaux associated to a signed permutation $\pi$ by the algorithm of Barbash and Vogan have respective descent sets $set(\pi^{-1})$ and $set(\pi)$. As a result, if  for any $I\subseteq \{0\}\cup[n-1]$ we denote $D^B_I$ the formal sum in $\mathbb{C}B_n$ of the signed permutations $\pi$ with $set(\pi) =I$ and  $c^K_{I_1,I_2,\cdots,I_p} = [D^B_K]\prod_jD^B_{I_j}$ the structure constants of Solomon's descent algebra of type B for $I_1,I_2,\cdots,I_p,K\subseteq \{0\}\cup[n-1]$, we have the following analogue of Equation~(\ref{eq : car}).

\begin{equation}
\label{eq : cdd} c^\emptyset_{IJ} = \sum_{\la \in \mathcal{P}^0(n)}d^B_{\la I}d^B_{\la J}.
\end{equation}

\subsection{Quasisymmetric functions of type B and main results}

Chow defines in his PhD thesis (\cite{Cho01}) an analogue of Gessel's algebra of quasisymmetric functions for signed permutations that is dual to Solomon's descent algebra of type B\footnote{Prior to Chow, Poirier (\cite{Poi98}) introduced an alternative type B analogue of the quasisymmetric functions used for example in \cite{AdiAthEliRoi15}. However Poirier's quasisymmetric functions are dual to the Mantaci-Reutenauer algebra and not Solomon's descent algebra of type B. See \cite{Pet05} for further details.}. \\ Let $X = \{\ldots,x_{-i},\ldots, x_{-2}, x_{-1}, x_0, x_1, x_2,\ldots, x_i,\ldots\}$ be a set of totally ordered commutative indeterminates with the further assumption that $x_{-i} = x_{i}$ and $I$ be a subset of $\{0\} \cup [n-1]$, he defines:
\begin{align*}
F_I^B(X) = \sum_{\substack{0\leq i_1\leq i_2\leq \ldots \leq i_n\\ j\in I \Rightarrow i_{j+1} > i_j}} x_{i_1}x_{i_2}\ldots x_{i_n},\\
M_I^B(X) = \sum_{\substack{j\notin I \Rightarrow i_{j+1} = i_j\\ j\in I \Rightarrow  i_{j+1} > i_j }} x_{i_1}x_{i_2}\ldots x_{i_n},
\end{align*}
where $i_0 = 0$. Note the particular rôle played by variable $x_0$.
\begin{exm}
Let $n=2$ and $X= \{x_{-2}, x_{-1}, x_0, x_1, x_2\}$ then $F_{\emptyset} =x_0^2 + x_1^2 + x_2^2 + x_0x_1+x_0x_2+x_1x_2$, $F_{\{1\}} = x_0x_1 +x_0x_2+ x_1x_2$, $F_{\{0\}} = x_1^2 + x_2^2 +x_1x_2$ and $F_{\{0,1\}} = x_1x_2$.
\end{exm}
Let $Y$ be a second copy of $X$ such that $X$ and $Y$ commute between each other. Chow uses the theory of $P$-partitions of type B to show that for $K\subseteq \{0\}\cup[n-1]$
\begin{align}
\label{eq : FcFF}
F^B_K(XY) = \sum_{I,J \subseteq \{0\}\cup[n-1]}c^K_{I,J}F^B_I(X)F^B_J(Y).
\end{align}
%We have the following result. 
\begin{thm}
\label{thm : Bcdd}
Equation~(\ref{eq : cdd}) is a consequence of Equation~(\ref{eq : FcFF}).
\end{thm}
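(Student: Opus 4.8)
The plan is to mirror, in type B, the argument by which Equation (\ref{eq : car}) was derived from Equation (\ref{eq : Faff}) in the symmetric group case. First I would observe that among all the $F^B_I$, the one indexed by $I=\emptyset$ should play the role that $s_{(n)}=F_\emptyset$ played in type A. Concretely, from the definition of $F^B_\emptyset(X)=\sum_{0\le i_1\le\cdots\le i_n}x_{i_1}\cdots x_{i_n}$, together with the identification $x_{-i}=x_i$, I would identify $F^B_\emptyset(X)$ with a suitable specialization of a Schur-type generating function in the variables $\{x_0,x_1,x_2,\dots\}$ — essentially the complete homogeneous symmetric function $h_n$ in those variables, which equals $\sum_{\mu}\ldots$; more precisely, the key is to expand $F^B_\emptyset$ in the type B fundamental basis the other way, namely to find the analogue of Equation (\ref{eq : SdF}) expressing the relevant ``type B Schur functions'' in terms of the $F^B_I$ with coefficients $d^B_{\lambda I}$.

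Next I would set $K=\emptyset$ in Equation (\ref{eq : FcFF}) to get
\begin{align*}
\sum_{I,J\subseteq\{0\}\cup[n-1]}c^\emptyset_{I,J}\,F^B_I(X)F^B_J(Y) = F^B_\emptyset(XY).
\end{align*}
The crux is then a Cauchy-type identity for $F^B_\emptyset(XY)$ indexed by the empty $2$-core partitions: I expect that $F^B_\emptyset(XY)=\sum_{\lambda\in\mathcal{P}^0(n)}\mathcal{S}_\lambda(X)\mathcal{S}_\lambda(Y)$ for an appropriate family of functions $\mathcal{S}_\lambda$ attached to domino tableaux (the ``domino functions'' / type B Schur functions studied by Chow), each of which decomposes as $\mathcal{S}_\lambda(X)=\sum_{I}d^B_{\lambda I}F^B_I(X)$. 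Substituting the Cauchy identity and then this decomposition on both sides, and matching coefficients of $F^B_I(X)F^B_J(Y)$ — which is legitimate because the $F^B_I(X)F^B_J(Y)$ are linearly independent, being the fundamental basis of the tensor product of two copies of Chow's algebra — yields exactly $c^\emptyset_{I,J}=\sum_{\lambda\in\mathcal{P}^0(n)}d^B_{\lambda I}d^B_{\lambda J}$.

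The main obstacle is supplying the two ingredients that in type A come ``for free'' from classical symmetric function theory: (i) the correct identification of $F^B_\emptyset$ and, more importantly, the type B Cauchy identity $F^B_\emptyset(XY)=\sum_{\lambda\in\mathcal{P}^0(n)}\mathcal{S}_\lambda(X)\mathcal{S}_\lambda(Y)$; and (ii) the domino analogue of Equation (\ref{eq : SdF}), i.e. $\mathcal{S}_\lambda=\sum_I d^B_{\lambda I}F^B_I$. For (ii) I would argue combinatorially: expand $\mathcal{S}_\lambda(X)$ as a sum over semistandard domino tableaux of shape $\lambda$ with entries weighted by the $x_i$ (with the $x_0$ playing its special role), and use the standardization map sending a semistandard domino tableau to a pair consisting of a standard domino tableau of the same shape and a compatible monomial, exactly as the ordinary semistandard-to-standard argument proves (\ref{eq : SdF}); the descent set of the standard part controls which $F^B_I$ the monomial belongs to. For (i), the cleanest route is to note that the Barbash–Vogan / Taşkin correspondence is a bijection between signed permutations and pairs of standard domino tableaux of equal shape in $\mathcal{P}^0(n)$ respecting descent sets on both coordinates (as recalled in the excerpt), which already gives Equation (\ref{eq : cdd}) bijectively and hence, read through Chow's generating series, the desired Cauchy identity; alternatively one invokes Chow's own development of type B $P$-partition theory and the associated domino functions. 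I would present the combinatorial/bijective version as the backbone and remark that it is equivalent to the generating-series manipulation, so that the theorem genuinely exhibits (\ref{eq : cdd}) as a formal consequence of (\ref{eq : FcFF}).
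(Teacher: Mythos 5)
Your overall architecture matches the paper's: set $K=\emptyset$ in Equation~(\ref{eq : FcFF}), expand the right-hand side by a type~B Cauchy identity over $\mathcal{P}^0(n)$, and convert back to the $F^B_I$ basis via the domino analogue of Equation~(\ref{eq : SdF}). Your ingredient (ii) is exactly the paper's Proposition~\ref{thm : GdF}, proved there by the same standardisation argument you sketch (with the one type-B subtlety that a label $0$ is forbidden when the top leftmost domino is vertical, which is what makes the descent in position $0$ come out right). So the skeleton is sound.

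The genuine gap is in your ingredient (i), the Cauchy identity $F^B_\emptyset(XY)=\sum_{\lambda\in\mathcal{P}^0(n)}\mathcal{G}_\lambda(X)\mathcal{G}_\lambda(Y)$, which is the actual content of the theorem. Your ``cleanest route'' is to obtain it from the Barbash--Vogan/Ta\c{s}kin bijection, i.e.\ from the bijective proof of Equation~(\ref{eq : cdd}) itself, read back through Chow's expansion. That is circular for the purpose at hand: the theorem asserts that (\ref{eq : cdd}) \emph{follows from} (\ref{eq : FcFF}), so the Cauchy identity must be established independently of (\ref{eq : cdd}); your fallback (``invoke Chow's $P$-partition theory'') does not supply a proof either. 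The paper closes this gap with an honest symmetric-function computation: it identifies $F^B_\emptyset(XY)=\mathcal{G}_{(2n)}(XY)$, observes the multiset identity $(XY)^+=X^+Y^+\cup X^-Y^-$, and then uses the $2$-quotient factorisation $\mathcal{G}_\lambda(X)=s_{\lambda^-}(X^-)s_{\lambda^+}(X^+)$ (Proposition~\ref{prop : Gss}) together with the branching rule
\begin{equation*}
s_{(n)}(X^-Y^-\cup X^+Y^+)=\sum_{k=0}^{n}s_{(k)}(X^-Y^-)\,s_{(n-k)}(X^+Y^+)
\end{equation*}
and two applications of the ordinary Cauchy identity, reassembling the pairs $(\lambda^-,\lambda^+)$ with $|\lambda^-|=k$, $|\lambda^+|=n-k$ into partitions $\lambda\in\mathcal{P}^0(n)$. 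None of this appears in your proposal; the $2$-quotient and the splitting of the ordered alphabet $(XY)^+$ are the missing ideas, and without them your argument either begs the question or stops at an unproved assertion.
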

The irreducible characters of $B_n$ are naturally indexed by partitions of $\mathcal{P}^0(n)$. Denote $\psi^{\la}$ the character indexed by $\la \in \mathcal{P}^0(n)$ and define the {\bf Kronecker coefficients of the hyperoctahedral group} $g^B(\la,\mu,\nu) = \frac{1}{|B_n|}\sum_{\omega \in B_n} \psi^{\la}(\omega)\psi^{\mu}(\omega) \psi^{\nu}(\omega)$. A more general form of Theorem~\ref{thm : Bcdd} can be stated as follows.
\begin{thm}
\label{thm : Bcgddd}
Let $I,J$ and $K$ $\subseteq \{0\}\cup[n-1]$. The number $c_{I,J,K}^\emptyset$  of triples of signed permutations $\pi_1, \pi_2, \pi_3$ in $B_n$ such that $set(\pi_1) = I$, $set(\pi_2) = J$, $set(\pi_3) = K$ and $\pi_1\pi_2\pi_3 = id_{2n}$ verifies
\begin{align}
c^\emptyset_{I,J,K} = \sum_{\la, \mu, \nu \in \mathcal{P}^0(n)}g^B(\la,\mu,\nu)d^B_{\la I}d^B_{\mu J}d^B_{\nu K}.
\end{align}
\end{thm}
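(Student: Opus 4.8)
The plan is to reproduce the proof of Theorem~\ref{thm : gencartan}, replacing every ingredient by its type~B counterpart. First I would extend Chow's identity~(\ref{eq : FcFF}) from two to three commuting alphabets $X,Y,Z$: the theory of type~B $P$-partitions that yields~(\ref{eq : FcFF}) applies unchanged to an iterated product $XYZ$, so that
\begin{align*}
\sum_{I,J,K\subseteq\{0\}\cup[n-1]}c^\emptyset_{I,J,K}\,F^B_I(X)F^B_J(Y)F^B_K(Z)=F^B_\emptyset(XYZ).
\end{align*}
Hence $F^B_\emptyset(XYZ)$ is the generating series of the $c^\emptyset_{I,J,K}$, and since the $F^B_I$ form a basis of the degree-$n$ part of Chow's ring, the triple products $F^B_I(X)F^B_J(Y)F^B_K(Z)$ are linearly independent; it therefore suffices to re-expand $F^B_\emptyset(XYZ)$ on this basis and identify coefficients.

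The heart of the argument is that re-expansion. I would use the type~B Schur functions $s^B_\la=\sum_{I}d^B_{\la I}F^B_I$, $\la\in\mathcal{P}^0(n)$ — the analogue of~(\ref{eq : SdF}) set up in the proof of Theorem~\ref{thm : Bcdd} — together with two facts from the same circle of ideas: that $F^B_\emptyset=s^B_{(2n)}$ (the row shape $(2n)$ lies in $\mathcal{P}^0(n)$ and its unique standard domino tableau, tiled by horizontal dominoes, has empty descent set, so $d^B_{(2n)I}=0$ unless $I=\emptyset$), and that $s^B_{(2n)}$ is the characteristic image of the trivial character of $B_n$ while, more generally, the type~B characteristic map sending $\psi^\la$ to $s^B_\la$ intertwines multiplication of alphabets with the internal (Kronecker) product of $B_n$-characters. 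Granting this, the double Cauchy identity $F^B_\emptyset(XY)=\sum_{\la}s^B_\la(X)s^B_\la(Y)$ — equivalent to~(\ref{eq : cdd}) via~(\ref{eq : FcFF}) — iterates exactly as in the symmetric-group case: writing $F^B_\emptyset(XYZ)=F^B_\emptyset\bigl(X(YZ)\bigr)=\sum_\la s^B_\la(X)s^B_\la(YZ)$ and then expanding $s^B_\la(YZ)=\sum_{\mu,\nu}g^B(\la,\mu,\nu)s^B_\mu(Y)s^B_\nu(Z)$ gives
\begin{align*}
F^B_\emptyset(XYZ)=\sum_{\la,\mu,\nu\in\mathcal{P}^0(n)}g^B(\la,\mu,\nu)\,s^B_\la(X)s^B_\mu(Y)s^B_\nu(Z),
\end{align*}
because $s^B_{(2n)}$, being the image of the trivial character, is the unit of the internal product. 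Substituting $s^B_\la=\sum_I d^B_{\la I}F^B_I$ into each factor, collecting the coefficient of $F^B_I(X)F^B_J(Y)F^B_K(Z)$ and comparing with the first display then gives $c^\emptyset_{I,J,K}=\sum_{\la,\mu,\nu}g^B(\la,\mu,\nu)d^B_{\la I}d^B_{\mu J}d^B_{\nu K}$.

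The main obstacle is precisely the representation-theoretic dictionary invoked in the second step: one has to know that Chow's combinatorially defined basis $F^B_I$, the domino statistics $d^B_{\la I}$ and the characters $\psi^\la$ fit together so that $s^B_\la$ genuinely is the characteristic image of $\psi^\la$, and that forming a product of alphabets realises the Kronecker product $\psi^\mu\cdot\psi^\nu=\sum_\la g^B(\la,\mu,\nu)\psi^\la$; equivalently, one needs the type~B analogue of the classical plethystic identity $s_{(n)}[XYZ]=\sum g(\la,\mu,\nu)s_\la s_\mu s_\nu$. The delicate point is the distinguished variable $x_0$ (with $x_{-i}=x_i$): one must check that the lexicographically ordered product $XYZ$ carries the correct type~B structure — in particular that the empty $2$-core condition defining $\mathcal{P}^0(n)$ is preserved under passing to a product of alphabets — so that the Cauchy kernel $F^B_\emptyset$ still expands on the $s^B_\la$ with $B_n$-Kronecker coefficients. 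If one prefers to avoid an explicit characteristic map, the alternative is to establish the triple Cauchy identity directly from the orthogonality relations of the $B_n$-characters, but the same verification concerning $x_0$ is unavoidable. Once it is in place, the remainder is a line-by-line transcription of the proof of Theorem~\ref{thm : gencartan}, and the corresponding statement about three-dimensional nonnegative integer arrays can be obtained from it by passing from $F^B_I$ to $M^B_I$ as in the proof of Theorem~\ref{lem : mKK}.
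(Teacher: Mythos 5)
Your overall strategy coincides with the paper's: extend Chow's identity~(\ref{eq : FcFF}) to three alphabets, identify $F^B_\emptyset(XYZ)$ with the ``Schur-like'' function of shape $(2n)$ (the paper's domino function $\mathcal{G}_{(2n)}$, which by Proposition~\ref{thm : GdF} is exactly your $s^B_\la=\sum_I d^B_{\la I}F^B_I$), apply a double Cauchy identity to peel off the first alphabet, and then expand $s^B_\la(YZ)$ with $B_n$-Kronecker coefficients. However, the step you label ``granting this'' --- namely $s^B_\la(YZ)=\sum_{\mu,\nu}g^B(\la,\mu,\nu)s^B_\mu(Y)s^B_\nu(Z)$, Equation~(\ref{eq : GgGG}) in the paper --- is the entire technical content of the theorem, and your proposal does not prove it; it only names two possible routes (a type~B characteristic map, or character orthogonality) and correctly observes that the distinguished variable $x_0$ and the $2$-core condition are the delicate points. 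That is a genuine gap: nothing in the formal framework you set up forces the internal product of $B_n$ (rather than, say, of $S_{2n}$ or of the Mantaci--Reutenauer setting) to appear, and the verification concerning $x_0$ is not a routine check.

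The paper fills this gap concretely as follows. First, the $2$-quotient gives $\mathcal{G}_\la(X)=s_{\la^-}(X^-)s_{\la^+}(X^+)$ (Proposition~\ref{prop : Gss}). Second, the wreath-product identity $p^-_{\mu^-}(U,V)\,p^+_{\mu^+}(U,V)=\sum_{\la}\psi^{\la}_{\mu}\,s_{\la^-}(V)s_{\la^+}(U)$ from Macdonald's Appendix~B, specialised at $U=X^+$, $V=X^-$, turns the left-hand side into $p_{\mu^+}(X)\,x_0^{\,n-|\mu^+|}$, giving $p_{\mu^+}(X)x_0^{\,n-|\mu^+|}=\sum_\la\psi^\la_\mu\,\mathcal{G}_\la(X)$. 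The key point --- and the precise resolution of your worry about $x_0$ --- is that this expression is multiplicative under products of alphabets, $p_{\mu^+}(XY)(x_0y_0)^{\,n-|\mu^+|}=\bigl(p_{\mu^+}(X)x_0^{\,n-|\mu^+|}\bigr)\bigl(p_{\mu^+}(Y)y_0^{\,n-|\mu^+|}\bigr)$, because $(XY)^+=X^+Y^+\cup X^-Y^-$ with distinguished variable $x_0y_0$. Inverting the character table of $B_n$ then yields~(\ref{eq : GgGG}), and the rest of your argument (substituting into~(\ref{eq : GGG}) with $Y$ replaced by $YZ$, then applying Proposition~\ref{thm : GdF}) goes through exactly as in the paper. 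So your skeleton is the right one, but to make it a proof you must supply this power-sum computation or an equivalent justification of the type~B Kronecker--Cauchy identity.
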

The proof of Theorems~\ref{thm : Bcdd}~and~\ref{thm : Bcgddd} uses Equation~(\ref{eq : FcFF}) and generating functions for domino tableaux. We detail it in the two following sections.
\begin{rem}Using the combinatorial interpretation of type B structure constants in \cite{BerBer92}, one can find an analogue of Equations~(\ref{eq : mKK}) and (\ref{eq : mKKK}) as a corollary to Theorem~\ref{thm : Bcgddd}. \end{rem}
\subsection{Domino functions and $2$-quotient}
%We look at a proper definition of a type B analogue of Schur functions sometimes referred to as {\bf domino functions} (\cite{KirLasLecThi94}) and relate them to fundamental quasisymmetric functions of type B. 
Generating functions for domino tableaux sometimes called {\it domino functions} are well studied objects (see e.g. \cite{KirLasLecThi94}). We introduce a modified definition to get an analogue of Schur functions verifying Propositions \ref{thm : GdF} and \ref{prop : Gss} that we need to prove Theorem \ref{thm : Bcgddd}. Our development differs by the addition of '$0$' entries to the domino tableaux in some cases.\\
A {\bf semistandard domino tableau} of shape $\la \in \mathcal{P}^0(n)$ and weight $\mu =(\mu_0$, $\mu_1$, $\mu_2$, $\cdots)$ with $\mu_i \geq 0$ and $\sum_i \mu_i = n$ is a tiling of the Young diagram of shape $\la$ with horizontal and vertical dominos labelled with integers in $\{0,1,2,\cdots\}$ such that labels are non decreasing along the rows, strictly increasing down the columns and exactly $\mu_i$ dominos are labelled with $i$. If the top leftmost domino is vertical, it cannot be labelled $0$ (we leave it to the reader to check that the only possible sub-pattern of dominos with label $0$ in a semistandard domino tableau is a row composed of horizontal dominos).  
The {\bf standardisation} $T^{st}$ of a semistandard domino tableau $T$ of weight $\mu$ is the standard domino tableau obtained by relabelling the dominos of $T$ with $1,2,\cdots,n$ such that the dominos labeled with $i_m = \min\{i\mid\mu_i>0\}$ are relabelled with $1, 2, \cdots, \mu_{i_m}$ from left to right and so on. The following lemma is left to the reader. 
\begin{lem}
\label{lem : std}
Let $T$ be a semistandard domino tableau of shape $\la \in \mathcal{P}^0(n)$ and weight $\mu$ and let $T^{st}$ be its standardisation. Denote $\overline{\mu}$ the composition of $n$ obtained by removing the $0$'s in $\mu$ and let $S(T)$ be the union of $set(\overline{\mu})$ and possibly $\{0\}$ if the top leftmost domino is vertical. We have  
\begin{equation}
set(T^{st}) \subseteq S(T).
\end{equation}
Conversely given a standard domino tableau $T_0$, and $\mu =(\mu_0$, $\mu_1$, $\mu_2,$ $\cdots)$ such that $\mu_0 = 0$ if the top leftmost domino of $T_0$ is vertical and $set(T_0)\setminus\{0\} \subseteq set(\overline{\mu})$ there exists exactly one semistandard domino tableau $T$ of weight $\mu$ with $T^{st} = T_0$.  
\end{lem}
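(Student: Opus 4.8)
The statement is the domino-tableau analogue, with the extra ``$0$''-entries taken into account, of the classical fact that standardisation is a weight-preserving bijection between semistandard Young tableaux and pairs consisting of a standard Young tableau together with a compatible content. The plan is to prove the two assertions separately, directly from the definitions of $T^{st}$ and of the descent set, the only genuinely new ingredient being the treatment of position $0$.

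For the inclusion $set(T^{st})\subseteq S(T)$, I would fix $i\in set(T^{st})$ and argue as follows. When $i=0$: the cell $(1,1)$ carries the smallest entry of $T$, so the top leftmost domino of $T$ is precisely the domino relabelled $1$ by standardisation; hence $0\in set(T^{st})$ iff that domino is vertical iff $0\in S(T)$ (note $0\notin set(\overline\mu)$ since the elements of $set(\overline\mu)$ are positive partial sums). When $i\geq 1$, it suffices to show that $i\notin set(\overline\mu)$ implies $i\notin set(T^{st})$. Under that assumption the dominoes relabelled $i$ and $i+1$ carry a common label $k$ in $T$; column-strictness forces all dominoes of $T$ with label $k$ to occupy pairwise disjoint sets of columns, so they are linearly ordered from left to right, and a short case analysis on the orientations of two consecutive such dominoes — using that the rows of $\la$ are left-justified and that columns are strictly increasing — shows that the later one is never strictly below the earlier one. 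Since standardisation relabels them precisely from left to right, $i+1$ is not strictly below $i$ in $T^{st}$, i.e.\ $i\notin set(T^{st})$.

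For the converse, given $T_0$ and $\mu$ as in the statement, I would introduce the weakly increasing surjection $b\colon[n]\to\{i:\mu_i>0\}$ sending the first $\overline\mu_1$ integers to the smallest index with $\mu_i>0$, the next $\overline\mu_2$ integers to the next such index, and so on, so that $b(j)<b(j+1)$ holds exactly when $j\in set(\overline\mu)$, and define $T$ to be the filling obtained from $T_0$ by replacing every label $j$ by $b(j)$, keeping all domino shapes and orientations. The steps are then: (i) $T$ is a semistandard domino tableau of weight $\mu$ — rows are weakly increasing since $b$ is; for column-strictness, if two vertically adjacent cells of $T_0$ carry labels $j<j'$ then $T_0$ must have a descent at some $i$ with $j\leq i\leq j'-1$ (otherwise each $m{+}1$, $j\leq m\leq j'{-}1$, would be weakly above $m$, forcing $j'$ weakly above $j$, contradicting vertical adjacency), and such an $i$ lies in $set(\overline\mu)$ by hypothesis, whence $b(j)<b(j')$; moreover the ``no vertical $0$-domino at the top left'' constraint holds because $\mu_0=0$ whenever the top leftmost domino is vertical, while for $\mu_0>0$ the block of $0$-dominoes of $T$ is forced — using that every label occurs exactly twice in $T_0$ and that $\{1,\dots,\mu_0-1\}$ contains no descent — to be a single row of horizontal dominoes; (ii) $T^{st}=T_0$ — within each block the dominoes of $T$ carry consecutive labels in $T_0$ with no descent between consecutive ones, hence are met in increasing label order when read from left to right, so standardisation restores their $T_0$-labels exactly; (iii) uniqueness — any semistandard $T'$ of weight $\mu$ with $T'^{st}=T_0$ must have its block of label $i$ equal to the set of dominoes that $T_0$ labels inside the $i$-th nonempty block, i.e.\ $T'=T$.

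I expect the main obstacle to be purely bookkeeping: the case analysis showing that equally labelled dominoes read from left to right create no descent, together with the consistent treatment of position $0$ and of the ``no vertical $0$-domino'' constraint, is elementary but fiddly. A cleaner route, if one is willing to invoke it, is to pass through the $2$-quotient: a (semi)standard domino tableau corresponds to a pair of (semi)standard Young tableaux, a descent at a position $i\geq 1$ corresponds to a descent of the pair, and the status of position $0$ is read off from which of the two tableaux contains the smallest entry; the lemma then follows from the classical standardisation lemma applied componentwise.
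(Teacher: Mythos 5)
The paper offers no proof of this lemma to compare against --- it is explicitly ``left to the reader'' --- so your argument has to stand on its own, and in outline it does: the treatment of position $0$ via the top leftmost domino, the left-to-right ordering of equally labelled dominos for the forward inclusion, and the construction $T=b\circ T_0$ with the three verifications (semistandardness, $T^{st}=T_0$, uniqueness) for the converse are exactly the right skeleton, and the uniqueness step is immediate as you say.

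Two of the steps you compress into parentheticals deserve more care than you give them, though neither is fatal. First, in the converse, the chain ``no descent in $[j,j'-1]$ $\Rightarrow$ each $m+1$ weakly above $m$ $\Rightarrow$ $j'$ weakly above $j$'' is only transitive if ``not strictly below'' is read as ``the topmost row of the domino of $m+1$ does not exceed the topmost row of the domino of $m$.'' Under the literal reading of ``strictly below'' (all rows of $D_{m+1}$ strictly greater than all rows of $D_m$), its negation allows the top row of $D_{m+1}$ to equal the \emph{bottom} row of a vertical $D_m$, and the inequalities then accumulate a $+1$ at each step and do not chain. One must rule out that configuration: if $D_m$ is vertical in rows $t,t+1$ of column $c$ and $D_{m+1}$ has its top cell in row $t+1$ of column $c'$, then $c'>c$ forces the cell $(t,c')$ to have label $m$ by the row and column conditions (impossible, as $(t,c)$ already does and they cannot form one domino), and $c'<c$ violates row-increase in row $t+1$; hence for consecutive dominos ``not strictly below'' does force the topmost row not to increase, and your chaining goes through. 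Second, in step (ii), ``no descent between consecutive labels in a block, hence met in increasing label order from left to right'' needs the symmetric observation that a domino with a larger $T_0$-label in the same block cannot lie in strictly smaller columns (again a short row/column argument in $T_0$). Finally, a caution on your proposed ``cleaner route'': reducing to the classical standardisation lemma componentwise via the $2$-quotient requires knowing that the descent set of a standard domino tableau equals the descent set of its standard bi-tableau under the $2$-quotient, which is itself a nontrivial compatibility statement (essentially Ta\c{s}kin's result invoked elsewhere in the paper), so it does not genuinely shortcut the bookkeeping --- it relocates it.
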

Proceed with the definition of the domino functions. 
Given indeterminate $X$ and a semistandard domino tableau $T$ of weight $\mu$, denote $X^T$ the monomial $x_0^{\mu_0}x_1^{\mu_1}x_2^{\mu_2}\cdots$. For $\la \in \mathcal{P}^0(n)$ we call the {\bf domino function} indexed by $\la$ the function $\mathcal{G}_\la$ defined as 
\begin{align}
\label{eq : defG}
\mathcal{G}_\la(X)  &= \sum_{sh(T) =\la}{X}^{T}
\end{align}
where the sum is on all semistandard domino tableaux of shape $\la$. The following proposition links domino functions and type B quasisymmetric functions. 
\begin{prop}
\label{thm : GdF}
For $\la \in \mathcal{P}^0(n)$, denote $\mathcal{G}_\la$ the domino function indexed by $\la$ defined in Equation~(\ref{eq : defG}). We have
\begin{equation}
\mathcal{G}_\la  = \sum_{I \subseteq \{0\}\cup[n-1]} d^B_{\la I} F^B_I.
\end{equation}
\end{prop}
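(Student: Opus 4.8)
The plan is to mimic, in the domino-tableau setting, the standard proof that a Schur function expands in the fundamental quasisymmetric basis via the descent sets of standard Young tableaux (Equation~(\ref{eq : SdF})). First I would group the semistandard domino tableaux of shape $\la$ appearing in the sum~(\ref{eq : defG}) according to their standardisation: $\mathcal{G}_\la(X) = \sum_{T_0} \sum_{T^{st} = T_0} X^T$, where the outer sum runs over standard domino tableaux $T_0$ of shape $\la$. The key is then to evaluate the inner sum $\sum_{T^{st} = T_0} X^T$ for a fixed $T_0$.

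For this I would invoke Lemma~\ref{lem : std}. It tells me that the semistandard domino tableaux $T$ with $T^{st} = T_0$ are in bijection with the weights $\mu = (\mu_0, \mu_1, \mu_2, \ldots)$ satisfying two conditions: $\mu_0 = 0$ whenever the top leftmost domino of $T_0$ is vertical, and $set(T_0)\setminus\{0\} \subseteq set(\overline{\mu})$. Writing $I = set(T_0)$, I would argue that the generating function $\sum_\mu X^\mu$ over exactly these weights is precisely $F_I^B(X)$: indeed, unwinding the definition of $F_I^B$, a monomial $x_{i_1}\cdots x_{i_n}$ with $0 \le i_1 \le \cdots \le i_n$ and a strict increase forced at each position in $I$ corresponds to a weight $\mu$ with $i_1 = 0$ allowed only if $0 \notin I$ (matching the vertical-domino condition, since $0 \in set(T_0)$ iff the domino labelled $1$ is vertical), and with the strict-increase positions in $I\setminus\{0\}$ translating exactly to $set(T_0)\setminus\{0\} \subseteq set(\overline\mu)$. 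Hence $\sum_{T^{st} = T_0} X^T = F^B_{set(T_0)}(X)$, and summing over all standard domino tableaux $T_0$ of shape $\la$ and collecting those with a common descent set $I$ gives $\mathcal{G}_\la = \sum_{I \subseteq \{0\}\cup[n-1]} d^B_{\la I} F^B_I$.

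The main obstacle is the careful bookkeeping around the role of the variable $x_0$ and position $0$: one must check that "$0 \in set(T_0)$" (the domino containing $1$ is vertical) corresponds on the nose to "$x_0$ forbidden at the start of the monomial" in the definition of $F_I^B$, and that no label-$0$ complication arises elsewhere (here the parenthetical remark in the definition of semistandard domino tableau — that a block of $0$-labelled dominos can only form a horizontal strip in the first row — is what guarantees the standardisation behaves as expected and that Lemma~\ref{lem : std} applies cleanly). Once the $x_0$ edge case is correctly aligned with the descent-at-$0$ condition, the rest is the same repackaging argument used for ordinary Schur functions, and the proof closes.
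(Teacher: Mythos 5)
Your proposal is correct and follows essentially the same route as the paper's own proof: classify the semistandard domino tableaux of shape $\la$ by their standardisation, apply Lemma~\ref{lem : std} to identify the generating function of the fibre over a fixed $T_0$ with $F^B_{set(T_0)}(X)$ (including the $x_0$/descent-at-$0$ bookkeeping), and collect the standard tableaux by descent set. No substantive difference from the published argument.
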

\begin{proof}
From the definition of Equation~(\ref{eq : defG}), $\mathcal{G}_\la(X)$ contains exactly one monomial of the form $x_0^{\mu_0}x_1^{\mu_1}x_2^{\mu_2}\cdots$ for any semistandard domino tableau $T$ of weight $\mu$ with $\mu_0 = 0$ if the top leftmost domino of $T$ is vertical. Classify the set of semistandard domino tableaux according to their standardisation and add the monomials corresponding to the tableaux in the same class. According to Lemma~\ref{lem : std}, for each standard domino tableau $T_0$, the sum of the monomials corresponding to the semistandard domino tableaux $T$ such that $T^{st} = T_0$ is $\sum_{\substack{i_1 \leq i_2 \leq \cdots \leq i_n\\j\in set(T_0) \Rightarrow i_{j+1} > i_j }}x_{i_1}x_{i_2}\cdots x_{i_n}$. As a result $\mathcal{G}_\la(X)  = \sum_{T_0} F^B_{set(T_0)}(X)$. 
\end{proof}
There is a well known bijection between semistandard domino tableaux of weight $\mu$ and {\bf bi-tableaux}, i.e. pairs of semistandard Young tableaux of respective weights $\mu^-$ and $\mu^+$ such that $\mu^+_i + \mu^-_i = \mu_i$ for all $i$. The respective shapes of the two Young tableaux depend only on the shape of the initial semistandard domino tableau. Denote $(T^-,T^+)$ the bi-tableau associated to a semistandard domino tableau $T$ of shape $\la$ and $(\la^-,\la^+)$ the shapes of $T^-$ and $T^+$. $(T^-,T^+)$ (resp. $(\la^-,\la^+)$) is the {\bf $2$-quotient} of $T$ (resp. $\la$). One can build $T^-$ and $T^+$ by filling each box of $T$ (a domino is composed of two boxes) by a '-' or a '+' sign such that the top leftmost box is filled with '-' and two adjacent boxes have opposite signs. $T^-$ (resp. $T^+$) is obtained from the sub-tableau of $T$ composed of the dominos with top rightmost box filled with '-' (resp. '+'). 
\begin{rem}
\label{rem : zero}
If the top leftmost domino of a semistandard domino tableau is vertical (rep. horizontal), its label is used for $T^-$ (resp. $T^+$). Therefore only $T^+$ may have entries equal to $0$. %As a result, in our setup, the bijection is actually between semistandard domino tableaux indexed by labels in $\{0,1,2,\cdots\}$ and pairs of semistandard Young tableaux indexed respectively by $\{1,2,\cdots\}$ and $\{0,1,2,\cdots\}$.
\end{rem}   
\begin{exm} Figure \ref{fig : SSDT} shows a domino tableau of weight \linebreak $\mu=(2,0,2,0,0,4,0,1)$, its standardisation and its 2-quotient. 
\begin{figure} [h]
\begin{center}
\includegraphics[width=10cm]{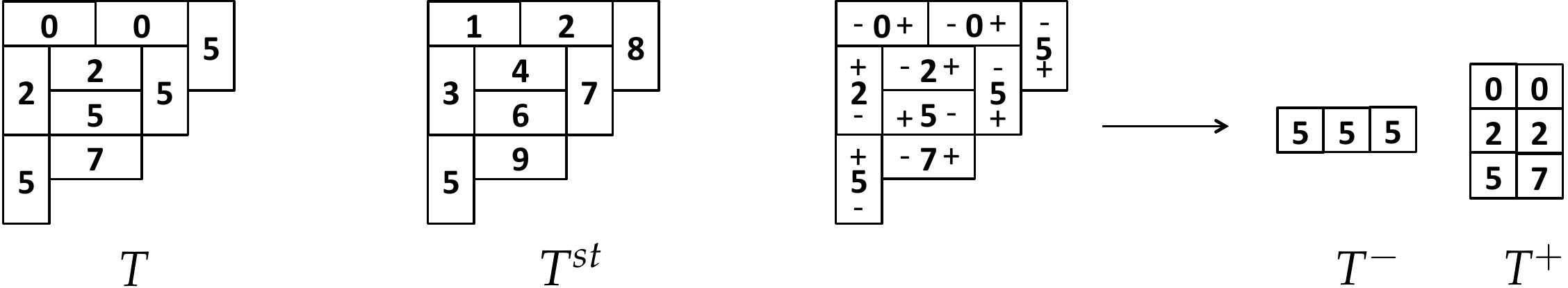}
\caption{A semistandard domino tableau, its standardisation and 2-quotient.}
\label{fig : SSDT}
\end{center}
\end{figure}
\end{exm}

Let $X^- = \{x_{-i}\}_{i>0}$ and $X^+ = \{x_{i}\}_{i\geq0}$ (note that because $x_{-i} = x_i$, $X^- = X^+\setminus\{x_0\}$). We have the following proposition.
\begin{prop}
\label{prop : Gss}
For $\la \in \mathcal{P}^0(n)$, the domino function $\mathcal{G}_\la$ and the Schur symmetric functions are related through
\begin{align}
\mathcal{G}_\la(X)= s_{\la^-}(X^-)s_{\la^+}(X^+).
\end{align}
\end{prop}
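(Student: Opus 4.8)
The plan is to transport the defining sum for $\mathcal{G}_\la$ through the $2$-quotient bijection $T\mapsto(T^-,T^+)$ recalled just above, which identifies semistandard domino tableaux of shape $\la$ with pairs of semistandard Young tableaux of shapes $\la^-$ and $\la^+$, and to check that this bijection turns the weight monomial $X^T$ into the product of the two tableau monomials of $T^-$ and $T^+$.

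First I would record how the bijection acts on weights: if $T$ has weight $\mu=(\mu_0,\mu_1,\mu_2,\dots)$ and $T^-$, $T^+$ have weights $\mu^-$, $\mu^+$, then $\mu_i=\mu^-_i+\mu^+_i$ for every $i$, and, by Remark~\ref{rem : zero}, $T^-$ never uses the entry $0$, so $\mu^-_0=0$, whereas $T^+$ may use $0$. Reading the entries of $T^-$ as indices in $X^-=\{x_1,x_2,\dots\}$ (which is legitimate since $x_{-i}=x_i$ and $T^-$ has only positive entries) yields the monomial $\prod_{i\ge1}x_i^{\mu^-_i}$; reading the entries of $T^+$ as indices in $X^+=\{x_0,x_1,x_2,\dots\}$ yields $\prod_{i\ge0}x_i^{\mu^+_i}$. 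Since $\mu^-_0=0$, the product of these two monomials is exactly $x_0^{\mu_0}x_1^{\mu_1}x_2^{\mu_2}\cdots=X^T$.

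Next I would sum Equation~(\ref{eq : defG}) over all semistandard domino tableaux of shape $\la$ and push the sum through the bijection. Because the bijection realises the shape-$\la$ semistandard domino tableaux as precisely the pairs $(T^-,T^+)$ in which $T^-$ is an arbitrary semistandard Young tableau of shape $\la^-$ with entries in $\{1,2,\dots\}$ and $T^+$ is an arbitrary semistandard Young tableau of shape $\la^+$ with entries in $\{0,1,2,\dots\}$, and because the monomial factors as above into a part depending only on $T^-$ times a part depending only on $T^+$, the sum splits as a product of two independent sums. By the tableau description of Schur functions, the first sum is $s_{\la^-}(X^-)$ and the second is $s_{\la^+}(X^+)$, which is the asserted identity.

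The delicate point -- and where the identity would fail under careless handling -- is the bookkeeping of the entry $0$: one must be sure that every $0$-labelled domino of $T$ is sent to $T^+$, so that $T^-$ lives on the alphabet $X^-=X^+\setminus\{x_0\}$ and the first factor is $s_{\la^-}$ on that smaller alphabet rather than on all of $X^+$. This is guaranteed by Remark~\ref{rem : zero} together with the earlier remark that a vertical top-leftmost domino cannot carry the label $0$ (and, more generally, that $0$-dominos can only form a horizontal row at the top of the diagram). Once that is in place, the factorisation of the sum and its identification with a product of Schur functions are entirely routine.
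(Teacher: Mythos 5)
Your proposal is correct and follows essentially the same route as the paper: transport the defining sum for $\mathcal{G}_\la$ through the $2$-quotient bijection, use Remark~\ref{rem : zero} to ensure the $0$-labelled dominos all land in $T^+$ so that $T^-$ lives on $X^-=X^+\setminus\{x_0\}$, factor the weight monomial, and identify the two resulting independent sums as $s_{\la^-}(X^-)$ and $s_{\la^+}(X^+)$. Your explicit bookkeeping of the weights $\mu^-_0=0$ and $\mu_i=\mu^-_i+\mu^+_i$ is a helpful elaboration of the step the paper leaves implicit.
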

\begin{proof}
According to the definition of the $2$-quotient above and Remark~\ref{rem : zero}, one can rewrite Equation~(\ref{eq : defG}) as
\begin{align*}
\mathcal{G}_\la(X)  &= \sum_{sh(T^-) = \la^-,\;sh(T^+) = \la^+}{X^-}^{T^-}{X^+}^{T^+},
\end{align*}
where the sum is on all pairs of semistandard Young tableaux $T^-$ and $T^+$ of shape $\la^-$ and $\la^+$ such that no entry of $T^-$ is equal to $0$.
\end{proof}

\subsection{Domino functions on products of indeterminates}
We are now ready to give the proof of Theorem~\ref{thm : Bcdd}. Although it can be deduced directly from Theorem~\ref{thm : Bcgddd}, a direct proof using classical properties of Schur functions is of interest. According to Proposition~\ref{thm : GdF}, for sets of indeterminates $X$ and $Y$, $F^B_\emptyset(XY) = \mathcal{G}_{(2n)}(XY)$. Further, note that
\begin{align*}
(XY)^+ &= \{(x_i,y_j)\}_{(i,j)\geq (0,0)} = \{(x_0,y_j)\}_{j\geq 0}\cup \{(x_i,y_j)\}_{i > 0, j \in \mathbb{Z}},\\
&= X^+Y^+\cup X^-Y^-.
\end{align*}
As a consequence, using Proposition~\ref{prop : Gss} one has
\begin{align}
\nonumber \mathcal{G}_{(2n)}(XY) &= s_{(n)} ((XY)^+) = s_{(n)}(X^-Y^-\cup X^+Y^+),\\
\nonumber & =\sum_{k = 0}^n s_{(k)}(X^-Y^-)s_{(n-k)}(X^+Y^+),\\
\nonumber & = \sum_{k = 0}^n\sum_{\la^- \vdash k}s_{\la^-}(X^-)s_{\la^-}(Y^-)\sum_{{\la^+} \vdash n-k}s_{\la^+}(X^+)s_{\la^+}(Y^+),\\
\nonumber &= \sum_{\la \in \mathcal{P}^0(n)}s_{\la^-}(X^-)s_{\la^+}(X^+)s_{\la^-}(Y^-)s_{\la^+}(Y^+),\\
\label{eq : GGG}&=\sum_{\la \in \mathcal{P}^0(n)}\mathcal{G}_{\la}(X)\mathcal{G}_{\la}(Y). 
\end{align}
As a result $\sum_{I,J}c^\emptyset_{IJ}F^B_I(X)F^B_J(Y) = \sum_{\la \in \mathcal{P}^0(n)}\mathcal{G}_{\la}(X)\mathcal{G}_{\la}(Y)$. Finally use Proposition~\ref{thm : GdF} to get Equation~(\ref{eq : cdd}).\\
We proceed with the proof of Theorem~\ref{thm : Bcgddd}. For sets of indeterminates $X,Y$ and $Z$, Equation~(\ref{eq : FcFF}) implies that $$\sum_{I,J,K}c^\emptyset_{I,J,K}F_I(X)F_J(Y)F_K(Z) = F_{\emptyset}(XYZ).$$ But according to Proposition~\ref{thm : GdF}, $F_{\emptyset}(XYZ) = \mathcal{G}_{(2n)}(XYZ)$. We use the theory of symmetric functions on wreath products. In particular define as in \cite{AdiAthEliRoi15} for any set of indeterminates $U$ and $V$ and any partition $\eps$ of $n$:
\begin{align*}
p^+_\eps(U,V) = \prod_i \left [p_{\eps_i}(U)+ p_{\eps_i}(V)\right ], \;\;\;&\;\;\; p^-_\eps(U,V) = \prod_i\left [p_{\eps_i}(U)- p_{\eps_i}(V)\right ].
\end{align*}
Note that $p^+_\eps(U,V) = p_\eps(U\cup V)$ (as a multiset) and if $V\subseteq U$, $p^-_\eps(U,V) = p_\eps(U\setminus V)$. Then (see \cite[I, Appendix B]{Mac99}), for partition $\mu \in \mathcal{P}^{0}(n)$ of 2-quotient $(\mu^-,\mu^+)$,
\begin{align*}
p^-_{\mu^-}(U,V)p^+_{\mu^+}(U,V) = \sum_{\la \in \mathcal{P}^0(n)}\psi^{\la}_{\mu}s_{\la^-}(V)s_{\la^+}(U).
\end{align*}
Set $U =X^+$ and $V =X^-$ and use Proposition~\ref{prop : Gss} to get for any $\mu \in \mathcal{P}^{0}(n)$ 
$$p_{\mu^+}(X)(x_0)^{n-|\mu^+|} = \sum_{\la \in \mathcal{P}^0(n)}\psi^{\la}_{\mu}\mathcal{G}_{\la}(X).$$
But $p_{\mu^+}(XY)(x_0y_0)^{n-|\mu^+|} = p_{\mu^+}(X)(x_0)^{n-|\mu^+|}p_{\mu^+}(Y)(y_0)^{n-|\mu^+|}$. We get
\begin{align*}
\sum_{\la \in \mathcal{P}^0(n)} \psi^{\la}\mathcal{G}_{\la}(XY)= \sum_{\mu, \nu \in \mathcal{P}^0(n)}\psi^\mu\psi^\nu\mathcal{G}_{\mu}(X)\mathcal{G}_{\nu}(Y).
\end{align*}
As a result
\begin{align}
\label{eq : GgGG}\mathcal{G}_{\la}(XY)= \sum_{\mu, \nu \in \mathcal{P}^0(n)}g^B(\la,\mu,\nu)\mathcal{G}_{\mu}(X)\mathcal{G}_{\nu}(Y).
\end{align}
Replacing $Y$ by the product $YZ$ in Equation~(\ref{eq : GGG}) and applying Equation~(\ref{eq : GgGG}), we deduce that $$\mathcal{G}_{(2n)}(XYZ)= \sum_{\la, \mu, \nu \in \mathcal{P}^0(n)}g^B(\la,\mu,\nu)\mathcal{G}_{\la}(X)\mathcal{G}_{\mu}(Y)\mathcal{G}_{\nu}(Z).$$ One ends the proof of Theorem~\ref{thm : Bcgddd} by applying proposition~\ref{thm : GdF}.

\bibliography{biblio.bib} 
\bibliographystyle{abbrv}

%% if you use biblatex then this generates the bibliography
%% if you use some other method then remove this and do it your own way
%\printbibliography

\end{document}